\newsavebox{\@brx}
\newcommand{\llangle}[1][]{\savebox{\@brx}{\(\m@th{#1\langle}\)}%
	\mathopen{\copy\@brx\kern-0.5\wd\@brx\usebox{\@brx}}}
\newcommand{\rrangle}[1][]{\savebox{\@brx}{\(\m@th{#1\rangle}\)}%
	\mathclose{\copy\@brx\kern-0.5\wd\@brx\usebox{\@brx}}}
\newsavebox{\measure@tikzpicture}
	\edef\tikzscale{\pgfmathresult}%
\DeclarePairedDelimiter\norm{\lvert}{\rvert}
\def\irr#1{{\rm  Irr}(#1)}
\newcounter{intro}
\newtheorem{introthm}[intro]{Theorem}
\newtheorem{thm}{Theorem}
\newtheorem{lem}[thm]{Lemma}
\theoremstyle{remark}
\theoremstyle{definition}
\title{A characterization of Nested Groups in terms of conjugacy classes}
\author{Shawn T. Burkett}
\address{Department of Mathematical Sciences, Kent State University, Kent,
	Ohio 44240, U.S.A.} \email{sburket1@kent.edu}
\author{Mark L. Lewis}
\address{Department of Mathematical Sciences, Kent State University, Kent,
	Ohio 44240, U.S.A.} \email{lewis@math.kent.edu}
\date{\today}
\keywords{nested groups; nested GVZ groups; conjugacy classes}
\subjclass[2010]{20C15}
\begin{document}

\begin{abstract}
A group is nested if the centers of the irreducible characters form a chain.  In this paper, we will show that there is a set of subgroups associated with the conjugacy classes of group so that a group is nested if and only if these subgroups form a chain.
\end{abstract}

\maketitle

Throughout this paper all groups will be finite.  In Research problems and themes I of \cite{YBGPPOV1}, Berkovich suggests the following two problems: Problem \#24 asks for a description of the $p$-groups $G$ for which the centers (quasi-kernels) of the irreducible characters form a chain with respect to inclusion and Problem \#30 suggests the study of the $p$-groups $G$ such that $\chi(1)^2 = \norm{G:Z(\chi)}$ for every character $\chi \in \irr G$.  In \cite{AN12gvz} and \cite{AN16gvz}, Nenciu initiates the study of groups that satisfy both of these properties and obtains several results.  Nenciu uses the term nested for groups satisfying the following condition on the centers of its irreducible characters: For all $\psi,\chi\in\irr{G}$ with $\psi(1)\le\chi(1)$, we have $Z(\chi)\le Z(\psi)$ whenever $\psi(1)\le\chi(1)$. It is easy to see that in this case the centers of such groups form a chain with respect to inclusion.

In this paper, we follow \cite{ML19gvz} and say that a group $G$ is {\it nested } if for all characters $\chi, \psi \in \irr G$ either $Z(\chi) \le Z(\psi)$ or $Z (\psi) \le Z(\chi)$. We note that this definition of nested is somewhat weaker than the definition given by Nenciu, but in Lemma 7.1 of \cite{ML19gvz}, it is shown that this definition is equivalent to the definition of Nenciu in the situations studied in \cite{AN12gvz} and \cite{AN16gvz}.  The second author proved a number of results regarding nested groups in \cite{ML19gvz}. Among these results, Theorem 1.3 of \cite{ML19gvz} gives a characterization of nested groups that is character-free. 

Many people have observed that there is a strong connection between the irreducible characters of a group and the conjugacy classes of group.  David Chillag has outlined many of these connections in \cite{chillag}.  We have defined nested groups in terms the irreducible characters of the group.  It makes sense to ask if there is an equivalent definition in terms of the conjugacy classes of $G$.  In this paper, we present such a characterization.

We now introduce some notation.  Let $G$ be a group.  Define $\gamma_G(g) = \{ [g,x] \mid x \in G\}$ for every element $g\in G$.  We let $[g,G]$ denote the subgroup generated by $\gamma_G (g)$.   We believe that it is well-known that $[g,G]$ is normal in $G$, but we do include a proof of this as Lemma 2.2 of \cite{SBML}, and so, $[g,G]$ is determined by the conjugacy class of $g$.  With these definitions, we prove the following: 

\begin{introthm} \label{nested cl}
Let $G$ be a nonabelian group. Then $G$ is nested if and only if for all elements $g,h \in G \setminus Z(G)$, either $[g,G] \le [h,G]$ or $[h,G] \le [g,G]$.
\end{introthm}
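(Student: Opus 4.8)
The plan is to set up a dictionary between the subgroups $[g,G]$ and the irreducible characters of $G$, and then observe that, through this dictionary, the two conditions in the theorem say literally the same thing.

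Two standard facts drive everything. First, for $g\in G$ and $\chi\in\irr{G}$ we have $g\in Z(\chi)$ if and only if $[g,G]\le\ker\chi$: this is because $Z(\chi)/\ker\chi=Z(G/\ker\chi)$, so $g\in Z(\chi)$ precisely when $g\ker\chi$ is central in $G/\ker\chi$, i.e. when $[g,x]\in\ker\chi$ for every $x\in G$. Second, every normal subgroup $N$ of $G$ equals the intersection of the kernels $\ker\chi$ over those $\chi\in\irr{G}$ with $N\le\ker\chi$, since the irreducible characters of $G/N$ separate the identity from the other elements. For $g\in G$ set $\mathcal C(g)=\{\chi\in\irr{G}:g\in Z(\chi)\}$; by the first fact this is also $\{\chi\in\irr{G}:[g,G]\le\ker\chi\}$.

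The key lemma I would prove is: for $g,h\in G$, $[g,G]\le[h,G]$ if and only if $\mathcal C(h)\subseteq\mathcal C(g)$. The forward implication is immediate from the kernel description of $\mathcal C$. For the converse, if $\mathcal C(h)\subseteq\mathcal C(g)$ then every $\chi$ with $[h,G]\le\ker\chi$ also has $[g,G]\le\ker\chi$, so $[g,G]$ lies in the intersection of all such kernels, which by the second fact is exactly $[h,G]$. Granting this lemma, the condition ``$[g,G]\le[h,G]$ or $[h,G]\le[g,G]$ for all $g,h\in G\setminus Z(G)$'' is equivalent to ``the sets $\mathcal C(g)$ for $g\in G\setminus Z(G)$ form a chain''; and since $g\in Z(G)$ gives $[g,G]=1$ and hence $\mathcal C(g)=\irr{G}$ (the largest possible value), this is in turn equivalent to ``the sets $\mathcal C(g)$ for all $g\in G$ form a chain''.

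It remains to match the latter with nestedness. Consider the relation $R\subseteq\irr{G}\times G$ with $(\chi,g)\in R$ iff $g\in Z(\chi)$; its rows are the subgroups $Z(\chi)$ and its columns are the sets $\mathcal C(g)$. I would invoke (or simply reprove inline) the purely combinatorial fact that the family of rows of a relation forms a chain if and only if the family of columns does: indeed, if the rows fail to be a chain, choose $\chi,\psi$ with $a\in Z(\chi)\setminus Z(\psi)$ and $b\in Z(\psi)\setminus Z(\chi)$; then $\chi\in\mathcal C(a)\setminus\mathcal C(b)$ while $\psi\in\mathcal C(b)\setminus\mathcal C(a)$, so the columns fail to be a chain, and symmetrically. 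Thus ``$G$ is nested'' (rows form a chain) is equivalent to ``the $\mathcal C(g)$ form a chain,'' which by the previous paragraph is equivalent to the stated condition on the $[g,G]$. The only step needing real care is the converse half of the key lemma, where the fact that $[h,G]$ is recovered as the intersection of the kernels containing it is essential; the bookkeeping around $Z(G)$ and the relation lemma are routine. In the final write-up I would probably replace the abstract relation lemma by two direct proofs-by-contradiction of the stated biconditional, each having exactly the shape just indicated.
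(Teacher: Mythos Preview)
Your argument is correct and genuinely different from the paper's. Both directions of your proof run through the duality between the subgroups $[g,G]$ and the sets $\mathcal C(g)$, together with the purely combinatorial observation that in any relation the rows form a chain if and only if the columns do; this makes the whole proof self-contained, using only Lemma~\ref{cen cond} and the fact that a normal subgroup is the intersection of the kernels containing it. The paper's forward direction implicitly uses the same kernel-intersection idea, but packages it so as to prove the stronger structural statement $[g,G]=[X_i,G]$ for $g\in X_i\setminus X_{i+1}$, which your approach does not directly yield. The paper's reverse direction is quite different: rather than your row/column duality, it introduces the subgroups $Z_g$ with $Z_g/[g,G]=Z(G/[g,G])$, shows they form a chain, and then appeals to an external characterization of nestedness (Lemma~3.2 of \cite{ML19gvz}) by verifying that every $Z_N$ coincides with some $Z_g$. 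Your route is shorter and more elementary for Theorem~\ref{nested cl} as stated; the paper's route costs an external citation but simultaneously delivers the refinement about the chain of centers recorded in Theorem~\ref{nested chain}.
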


Nenciu defines a group $G$ to be a {\it generalized VZ}-group (or a GVZ-group) if it satisfies $\chi(1)^2 = \norm {G:Z(\chi)}$ for every character $\chi \in \irr G$.  We obtain the following characterization of nested GVZ-groups.  As we stated above, Nenciu studied nested GVZ-groups and it makes sense to apply Theorem \ref{nested cl} in this case, and we obtain the following theorem. 

\begin{introthm} \label{nested gvz class}
Let $G$ be a nonabelian group. Then $G$ is a nested GVZ-group if and only if for all elements $g, h \in G$, either $\gamma_G (g) \subseteq \gamma_G (h)$ or $\gamma_G (h) \subseteq \gamma_G (g)$.
\end{introthm}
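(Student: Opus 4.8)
The plan is to deduce Theorem~\ref{nested gvz class} from Theorem~\ref{nested cl} by bringing in one structural fact about GVZ-groups, which is the only external input needed: a finite group $G$ is a GVZ-group if and only if $\gamma_G(g) = [g,G]$ for every $g \in G$ — equivalently, every conjugacy class of $G$ is a coset of a normal subgroup of $G$. (One direction is seen via column orthogonality: in a GVZ-group the irreducible characters that do not vanish at $g$ are exactly those of $G/[g,G]$, so $|C_G(g)| = \sum_{\chi \in \irr{G/[g,G]}} \chi(1)^2 = |G:[g,G]|$, which forces the conjugacy class of $g$, which always lies inside $g[g,G]$, to equal $g[g,G]$; the reverse implication follows from examining the scalar by which the class sum of $g$ acts in an irreducible representation.) Granting this, the whole theorem reduces to a single observation, which is where the real content lies: \emph{if the sets $\gamma_G(g)$, $g \in G$, form a chain under inclusion, then each $\gamma_G(g)$ is a subgroup of $G$}, and hence equals $\langle \gamma_G(g) \rangle = [g,G]$.

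To prove that observation I would argue in two steps. First, each $\gamma_G(g)$ is a normal subset of $G$: a change of variables in the commutators gives $\gamma_G(g^y) = \gamma_G(g)^y$ for all $y \in G$, and since $\gamma_G(g)^y$ and $\gamma_G(g)$ have the same finite cardinality and are comparable by hypothesis, they coincide. Second, $\gamma_G(g)$ is closed under multiplication: given $a = [g,x]$ and $b = [g,y]$ in $\gamma_G(g)$, normality gives $b^{x^{-1}} \in \gamma_G(g)$, say $b^{x^{-1}} = [g,z]$, so $b = [g,z]^x$, and the commutator identity $[g,zx] = [g,x][g,z]^x$ yields $ab = [g,zx] \in \gamma_G(g)$. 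Since a nonempty finite subset of a group that is closed under multiplication is a subgroup, $\gamma_G(g)$ is a subgroup and the observation follows.

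The theorem then follows quickly. If $G$ is a nested GVZ-group and $g, h \in G$, then either $g \in Z(G)$, so $\gamma_G(g) = \{1\} \subseteq \gamma_G(h)$ (and symmetrically if $h \in Z(G)$), or $g, h \in G \setminus Z(G)$, in which case $\gamma_G(g) = [g,G]$ and $\gamma_G(h) = [h,G]$ are comparable by Theorem~\ref{nested cl}; thus the $\gamma_G$-condition holds. Conversely, suppose the sets $\gamma_G(g)$ form a chain. By the observation, $\gamma_G(g) = [g,G]$ for every $g$, so $G$ is a GVZ-group; moreover, passing to the generated subgroups, the subgroups $[g,G]$ with $g \in G \setminus Z(G)$ form a chain, so $G$ is nested by Theorem~\ref{nested cl}. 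Hence $G$ is a nested GVZ-group, completing the proof.

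I expect the main obstacle to be the normality of $\gamma_G(g)$ in the converse direction: this is the step that upgrades a purely set-theoretic containment into the algebraic closure property that makes the commutator identity usable, and it is the only place where the chain hypothesis on the entire family $\{\gamma_G(g)\}$ is genuinely used. The other point requiring care is recording the GVZ condition in the commutator form "$\gamma_G(g) = [g,G]$ for all $g$", since that equivalence is the sole step in which the argument leaves the language of conjugacy classes for that of characters.
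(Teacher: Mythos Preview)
Your proposal is correct and follows essentially the same approach as the paper: both directions are reduced to Theorem~\ref{nested cl} via the GVZ characterization that $\gamma_G(g)$ is a subgroup (equivalently $\gamma_G(g)=[g,G]$), and the crux of the converse---that the chain condition forces each $\gamma_G(g)$ to be a subgroup---rests in both arguments on the observation that $\gamma_G(g^y)$ and $\gamma_G(g)$ are comparable of equal size, hence equal. The only cosmetic difference is the commutator identity used for closure: the paper computes $[g,y]^{-1}[g,x]=[g^y,y^{-1}x]\in\gamma_G(g^y)=\gamma_G(g)$ in one line, whereas you first record normality of $\gamma_G(g)$ and then apply $[g,zx]=[g,x][g,z]^x$.
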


As we stated above, Nenciu introduced the study of nested GVZ-groups in her papers \cite{AN12gvz} and \cite{AN16gvz}.  Thus, from the beginning the study of nested groups and GVZ-groups has been linked.  However, we believe that the two concepts are really independent of each other.  In \cite{SBMLII} and \cite{ML19gvz}, we have focused on studying nested groups, and the examples presented in \cite{ML19gvz} show that there are nested groups that are not GVZ-groups.  That being said, both of those papers do include a significant number of results regarding nested GVZ-groups.  On the other hand, in the paper \cite{SBML} we study GVZ-groups.  We note that the main focus of this paper is on nested groups, and that other than one result, which is well-known, the results used to prove Theorem \ref{nested cl} are independent of the work in \cite{SBML}.  Not surprisingly, we use a characterization of GVZ-groups proved in \cite{SBML} in the proof of Theorem \ref{nested gvz class}, which is about nested GVZ-groups.  We also mention that the techniques in this paper are completely independent of the techniques used in \cite{SBMLII} and \cite{ML19gvz}.
  
In \cite{SBML}, we prove that $G$ is a GVZ-group if and only if  $\gamma_G (g)$ is a group for every element $g \in G$.  Note that we are not assuming this in Theorem \ref{nested gvz class}, instead this ends up being a consequence of the containment of the sets $\gamma_G (g)$ for $g\in G$. 

Recall from above that a group $G$ is nested if $\{ Z (\chi) \mid \chi \in \irr G\}$ is a chain with respect to inclusion.  In this case, we write $\{ Z (\chi) \mid \chi \in\irr G \} = \{ X_0, X_1,\dotsb, X_n \}$, where $G = X_0 > X_1 > \dotsb X_n \ge 1$. Following \cite {ML19gvz}, we call this the {\it chain of centers} for $G$. 


When $N$ is a normal subgroup of $G$, we write $\irr {G \mid N}$ for the set of characters in $\irr G$ that do not contain $N$ in their kernels.  The following lemma appears in \cite{SBML}.  Since the proof is only one line, we include it here also.

\begin{lem} \label{cen cond}
Let $G$ be a group.  Fix an element $g \in G$ and a character $\chi \in \irr G$.  Then $g \in Z(\chi)$ if and only if $[g,G] \le \ker (\chi)$.
\end{lem}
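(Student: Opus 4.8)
The plan is to unwind the definition of the character center $Z(\chi)$ and translate it into a statement about commutators. Recall that $Z(\chi)$ is, by definition, the preimage in $G$ of $Z(G/\ker\chi)$; equivalently, $Z(\chi) = \{g \in G : |\chi(g)| = \chi(1)\}$, and the equivalence of these two descriptions is a standard fact from character theory (see, e.g., Isaacs, \emph{Character Theory of Finite Groups}, Lemma 2.27). I would work with the quotient description, since it makes the commutator condition transparent.

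First I would observe that $g \in Z(\chi)$ holds if and only if the coset $g\ker\chi$ lies in $Z(G/\ker\chi)$, which in turn holds if and only if $g\ker\chi$ commutes with $x\ker\chi$ for every $x \in G$, i.e. $[g,x] \in \ker\chi$ for all $x \in G$. This last condition says precisely that $\gamma_G(g) \subseteq \ker\chi$. Since $\ker\chi$ is a subgroup of $G$, containing the set $\gamma_G(g)$ is equivalent to containing the subgroup it generates, namely $[g,G]$. Chaining these equivalences together gives $g \in Z(\chi) \iff [g,G] \le \ker\chi$, as claimed.

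There is essentially no obstacle here: the only point requiring any care is citing the correct normalization for $Z(\chi)$ so that the reduction to $Z(G/\ker\chi)$ is legitimate, and noting that $\ker\chi$ is genuinely a subgroup (hence absorbs the passage from the generating set $\gamma_G(g)$ to $[g,G]$). Everything else is a direct rewriting of definitions, which is why the argument collapses to a single line.
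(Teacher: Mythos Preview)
Your proof is correct and follows exactly the same route as the paper: both invoke the definition $Z(\chi)/\ker(\chi) = Z(G/\ker(\chi))$ and read off the commutator condition. You simply spell out the chain of equivalences that the paper compresses into a single line.
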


\begin{proof}
This follows immediately from the definition that states: $Z(\chi)/\ker (\chi) = Z (G/\ker (\chi))$.
\end{proof}

This next theorem includes Theorem \ref{nested cl}.  We also obtain a partial generalization of Nenciu's result \cite [Theorem 3.3]{AN16gvz}, which states that when $G$ is a nested group with chain of centers $G = X_0 > X_1 > \dotsb > X_n > 1$, then $G$ is a GVZ-group if and only if $\gamma_G (g) = [X_i,G]$ for every element $g \in X_i \setminus X_{i+1}$ and every integer $0 \le i \le n-1$.  In particular, we show when $G$ is a nested group, then $[g,G] = [X_i,G]$ for all $g \in X_i \setminus X_{i+1}$.  Hence, removing the GVZ hypothesis, we need to replace $\gamma_G (g)$ by $[g,G]$ to obtain equality with $[X_i,G]$.

\begin{thm}\label{nested chain}
Let $G$ be a nonabelian group. Then $G$ is nested if and only if the set $\{[g,G] : g \in G\}$ is a chain. Moreover, in the event that $G$ is nested with chain of centers $G = X_0 > X_1 > \dotsb > X_n \ge 1$, we have $[g,G] = [X_i,G]$ for every element $g \in X_i \setminus X_{i+1}$ and every integer $0 \le i \le n-1$.
\end{thm}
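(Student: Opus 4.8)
The plan is to push everything through Lemma~\ref{cen cond}, which says $g\in Z(\chi)$ if and only if $[g,G]\le\ker(\chi)$, together with the standard fact that every normal subgroup $N\trianglelefteq G$ equals $\bigcap\{\ker(\chi) : \chi\in\irr G,\ N\le\ker(\chi)\}$ (i.e.\ the irreducible characters of $G/N$ have trivial common kernel). I would dispose of the ``if'' direction first, since it is short. Assume $\{[g,G] : g\in G\}$ is a chain and, for contradiction, that $G$ is not nested; then there are $\chi,\psi\in\irr G$ with $Z(\chi)\not\le Z(\psi)$ and $Z(\psi)\not\le Z(\chi)$, so we may choose $g\in Z(\chi)\setminus Z(\psi)$ and $h\in Z(\psi)\setminus Z(\chi)$. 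By Lemma~\ref{cen cond} this means $[g,G]\le\ker(\chi)$, $[g,G]\not\le\ker(\psi)$, $[h,G]\le\ker(\psi)$, and $[h,G]\not\le\ker(\chi)$. Since $[g,G]$ and $[h,G]$ are comparable, whichever is contained in the other gives an immediate contradiction (e.g.\ $[g,G]\le[h,G]\le\ker(\psi)$ contradicts $[g,G]\not\le\ker(\psi)$). Hence $G$ is nested.

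For the ``only if'' direction, assume $G$ is nested with chain of centers $G = X_0 > X_1 > \dotsb > X_n \ge 1$. First I would record that $X_n = Z(G)$: indeed $g\in Z(G)$ iff $[g,G]\le\bigcap_\chi\ker(\chi)=1$ iff $g\in Z(\chi)$ for every $\chi$, so $Z(G)=\bigcap_\chi Z(\chi)$, which is the least member $X_n$ of the chain. The key step is to determine, for a fixed $i$ with $0\le i\le n-1$ and a fixed $g\in X_i\setminus X_{i+1}$, the set of characters having $g$ in their center. If $g\in Z(\chi)$, then $Z(\chi)=X_k$ for some $k$; since $g\notin X_{i+1}$ we cannot have $k\ge i+1$, so $k\le i$ and $X_i\le X_k=Z(\chi)$. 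Conversely, $X_i\le Z(\chi)$ gives $g\in X_i\le Z(\chi)$. Therefore $\{\chi : g\in Z(\chi)\}=\{\chi : X_i\le Z(\chi)\}$, and by Lemma~\ref{cen cond} together with the intersection-of-kernels fact, $[g,G]=\bigcap\{\ker(\chi) : X_i\le Z(\chi)\}$, which depends only on $i$ and not on the chosen $g$.

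To finish, I claim this common subgroup equals $[X_i,G]$. Since $g\in X_i$, the definitions give $[g,G]\le[X_i,G]$. Conversely, for any $x\in X_i$ and any $\chi$ with $X_i\le Z(\chi)$ we have $x\in Z(\chi)$, so $[x,G]\le\ker(\chi)$ by Lemma~\ref{cen cond}; intersecting over all such $\chi$ yields $[x,G]\le[g,G]$, and since $[X_i,G]$ is generated by the subgroups $[x,G]$ with $x\in X_i$, we obtain $[X_i,G]\le[g,G]$, hence equality. As every element of $G$ lies in some $X_i\setminus X_{i+1}$ with $0\le i\le n-1$ or in $X_n=Z(G)$ (where $[g,G]=1=[X_n,G]$), it follows that $\{[g,G]:g\in G\}=\{[X_i,G] : 0\le i\le n\}$; and $X_0>X_1>\dotsb>X_n\ge 1$ implies $[X_0,G]\ge[X_1,G]\ge\dotsb\ge[X_n,G]$, a chain. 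I expect the only real subtlety to be the identification in the middle paragraph: the hypothesis $g\notin X_{i+1}$ is exactly what pins down the level $X_i$, and once that is in place the intersection-of-kernels fact does the remaining work with no computation.
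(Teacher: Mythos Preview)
Your proof is correct. The ``only if'' direction (nested $\Rightarrow$ chain, together with the identification $[g,G]=[X_i,G]$) is essentially the paper's argument rephrased: both hinge on Lemma~\ref{cen cond} and the fact that a normal subgroup is the intersection of the kernels of the irreducible characters lying above it; the paper expresses this via the contrapositive inclusion $\irr{G\mid [X_i,G]}\subseteq\irr{G\mid [g,G]}$, while you compute the intersection explicitly, but the content is the same.

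The ``if'' direction, however, is genuinely different and more elementary than the paper's. The paper defines $Z_g/[g,G]=Z(G/[g,G])$, shows $[Z_g,G]=[g,G]$, and then invokes an external characterization of nested groups (Lemma~3.2 of \cite{ML19gvz}), which requires verifying that for every normal subgroup $N$ the center $Z_N$ of $G/N$ coincides with some $Z_g$; this takes a further paragraph of argument. Your direct contradiction---pick $g\in Z(\chi)\setminus Z(\psi)$, $h\in Z(\psi)\setminus Z(\chi)$, and observe that comparability of $[g,G]$ and $[h,G]$ is incompatible with the four containment/non-containment relations coming from Lemma~\ref{cen cond}---is self-contained, avoids the external reference, and is considerably shorter. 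The paper's route has the side benefit of exhibiting the chain $\{Z_g\}$ explicitly and linking it to the general machinery of \cite{ML19gvz}, but for the bare equivalence your argument is preferable.
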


\begin{proof}
First, assume that $G$ is nested with chain of centers $G = X_0 > X_1 > \dotsb > X_n \ge 1$. We will show that $[g,G] = [X_i,G]$ for every element $g \in X_i \setminus X_{i+1}$ and every integer $i \ge 0$.  Thus, we fix an integer $i \ge 0$ and an element $g \in X_i \setminus X_{i+1}$.  If $\psi \in \irr {G \mid [X_i,G]}$, then $X_i$ is not contained in $Z(\psi)$ by Lemma \ref{cen cond}. Thus, we have $Z(\psi) \le X_{i+1}$. This implies that $g \notin Z(\psi)$ and hence also that $[g,G] \nleq \ker(\psi)$. We deduce that $[X_i,G] \le [g,G]$. But $g \in X_i$, so $[g,G] \le [X_i,G]$ as well; this gives $[g,G] = [X_i,G]$. It follows that the set $\{ [g,G] \mid g \in G\}$ is a chain and that $[g,G] = [X_i,G]$ for every element $g \in X_i \setminus X_{i+1}$ and every integer $0 \le i\le n-1$.
	
Now assume that the set $\{[g,G] \mid g \in G\}$ is a chain.  For each element $g \in G$, define the subgroup $Z_g \le G$ by $Z_g/[g,G] = Z (G/[g,G])$.  Fix an element $g \in G$.  Then we have $[Z_g,G] \le [g,G]$. On the other hand, since $[g,G] \le [g,G]$, we have $g \in Z_g$ and so, $[g,G] \le [Z_g,G]$.  This proves that $[Z_g,G] = [g,G]$ for every element $g \in G$.  Also note that the set $\{Z_g \mid g \in G\}$ must also be a chain.  By Lemma 3.2 of \cite{ML19gvz}, it suffices to show that there exists an element $h \in G$ so that $G = Z_h$ and for every normal subgroup $N \le G$, there exists an element $g \in G$ so that $Z_N = Z_g$.  First, there exists an element $h \in G$ so that $[h,G]$ is maximal among the sets $[g,G]$.  Since these subgroups form a chain, we have $[g,G] \le [h,G] \le G'$ for every element $g \in G$.  Observe that $G'$ is generated by the various subgroups $[g,G]$ as $g$ runs over $G$, and so, we conclude that $G' = [h,G]$.  Notice that $[h,G] = G'$ implies that $Z_h = G$.
	
Consider a normal subgroup $N < G$.  Set $Z_N/N = Z(G/N)$.  We can find an element $g \in G$ so that $Z_N \le Z_g$ and $Z_N$ is not contained in $Z_k$ where $k$ is the element of $G$ so that $Z_k < Z_g$ are consecutive terms in the chain of $Z_g$'s.  Hence, there is an element $x \in Z_N \setminus Z_k$.  We have that $[x,G] \not\le [k,G]$ and $[x,G] \le [g,G]$.  It follows that $[x,G] = [g,G]$.  Since $x \in Z_N$, we have $[x,G] \le N$.  Thus, if $y \in Z_g$, then $[y,G] \le [g,G] = [x,G] \le N$ and thus, $y \in Z_N$.  We conclude that $Z_g \le Z_N$, and thus, $Z_N = Z_g$ as desired.  
\end{proof}

We obtain Theorem \ref{nested gvz class}.  Notice that we recover one implication of Nenciu's result \cite [Theorem 3.3]{AN16gvz}.

\begin{thm}\label{nested gvz thm}
Let $G$ be a nonabelian group. The following are equivalent. 
\begin{enumerate}[label={\bf(\arabic*)}]
\item $G$ is a nested GVZ-group. 
\item For every $g, h \in G$, either $\gamma_G (g) \subseteq \gamma_G (h)$ or $\gamma_G (h) \subseteq \gamma_G (g)$.
\end{enumerate}
Moreover, in the event that $G$ is a nested GVZ-group with chain of centers $G = X_0 > X_1 > \dotsb > X_n > 1$, then we have $\mathrm{cl}_G (x) = x[X_i,G]$ and $[x,G] = [X_i,G]$ for every integer $0 \le i\le n$ and every element $x \in X_i \setminus X_{i+1}$.
\end{thm}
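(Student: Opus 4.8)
The equivalence (1) $\Leftrightarrow$ (2) will follow by combining the characterization of nested groups in Theorem~\ref{nested chain} with the characterization of GVZ-groups from \cite{SBML} stating that $G$ is a GVZ-group precisely when $\gamma_G(g)$ is a subgroup of $G$ for every $g \in G$. So the two halves of the proof are: first, assuming (1), deduce the nesting of the sets $\gamma_G(g)$; second, assuming (2), recover both that $G$ is nested and that $G$ is a GVZ-group.

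For the forward direction, suppose $G$ is a nested GVZ-group. Since it is a GVZ-group, each $\gamma_G(g)$ is a subgroup, hence $\gamma_G(g) = [g,G]$ for every $g$. Since $G$ is nested, Theorem~\ref{nested chain} tells us the collection $\{[g,G] : g \in G\}$ is a chain; substituting $\gamma_G(g) = [g,G]$ gives exactly condition (2). (Note $\gamma_G(g) = \{1\}$ when $g \in Z(G)$, so those elements cause no trouble — the empty/trivial case sits at the bottom of the chain.) This direction is essentially immediate once the two prior characterizations are in hand.

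For the reverse direction, assume (2). The key observation is that any chain of sets of the form $\gamma_G(g)$, each of which contains $1$ (since $1 = [g,g] \in \gamma_G(g)$), forces each $\gamma_G(g)$ to actually be a subgroup: pick any $g$, and for $a, b \in \gamma_G(g)$ I want $ab^{-1} \in \gamma_G(g)$. Writing $a = [g,x]$, the set $\gamma_G(x)$ contains $a$, and since the $\gamma$'s form a chain, $\gamma_G(g)$ and $\gamma_G(x)$ are comparable; chasing such comparabilities (and using that $[g,G]$ is the subgroup generated by $\gamma_G(g)$, which by the chain condition cannot be strictly larger than the union of the $\gamma$'s below it) should pin down $\gamma_G(g) = [g,G]$. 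Once every $\gamma_G(g)$ is a subgroup, $G$ is a GVZ-group by \cite{SBML}, and $\gamma_G(g) = [g,G]$ for all $g$, so condition (2) becomes the statement that $\{[g,G] : g \in G\}$ is a chain, whence $G$ is nested by Theorem~\ref{nested chain}. This is the delicate step, and it is where I expect the real work to be: showing the chain condition on the sets $\gamma_G(g)$ alone is strong enough to force each to be closed under the group operation.

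For the "moreover" clause, assume $G$ is a nested GVZ-group with chain of centers $G = X_0 > X_1 > \dotsb > X_n > 1$. By Theorem~\ref{nested chain} we already have $[x,G] = [X_i,G]$ for every $x \in X_i \setminus X_{i+1}$ and every $i$ (extending to $i = n$ since $X_{n+1}$ may be taken to be $1$ or the relevant endpoint is handled directly). Because $G$ is a GVZ-group, $\gamma_G(x) = [x,G] = [X_i,G]$, and then $\mathrm{cl}_G(x) = x\gamma_G(x) = x[X_i,G]$ follows from the general identity $\mathrm{cl}_G(x) = \{x[x,y] : y \in G\} = x\gamma_G(x)$. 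The only thing to check carefully is the boundary index $i = n$: since $X_n > 1$ is the smallest center and $G$ is nonabelian, we should confirm $[X_n, G]$ is the appropriate (possibly trivial) subgroup and that the formula $\mathrm{cl}_G(x) = x[X_n,G]$ holds there as well, which again is immediate from $\gamma_G(x) = [x,G]$.
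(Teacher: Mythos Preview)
Your forward direction (1) $\Rightarrow$ (2) and your treatment of the ``moreover'' clause match the paper's approach and are correct.

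The gap is in (2) $\Rightarrow$ (1). You correctly identify that the goal is to show each $\gamma_G(g)$ is a subgroup, but your sketched argument does not work as written and is missing the key idea. First, a small error: writing $a = [g,x]$, the element lying in $\gamma_G(x)$ is $[x,g] = a^{-1}$, not $a$ itself, so the comparability you want to chase is not set up correctly. More seriously, the vague plan of ``chasing comparabilities'' between $\gamma_G(g)$ and $\gamma_G(x)$ for various $x$ gives no control: there is no reason those sets should be related in a way that forces closure under multiplication, and the parenthetical about $[g,G]$ not being strictly larger than a union of $\gamma$'s below it is not justified by the chain hypothesis alone.

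The paper's argument is short but uses a specific trick you are missing. For any $y \in G$, the sets $\gamma_G(g)$ and $\gamma_G(g^y)$ have the same cardinality (both equal $|\mathrm{cl}_G(g)|$), and by hypothesis (2) one contains the other; hence $\gamma_G(g) = \gamma_G(g^y)$. Then a commutator identity gives
\[
[g,y]^{-1}[g,x] = [y,g][g,y(y^{-1}x)] = [g^y, y^{-1}x] \in \gamma_G(g^y) = \gamma_G(g),
\]
so $\gamma_G(g)$ is closed under left translation by inverses of its own elements, hence is a subgroup. The essential insight is to compare $\gamma_G(g)$ not with $\gamma_G(x)$ for arbitrary $x$, but with $\gamma_G(g^y)$ for conjugates of $g$, where the equal-size observation forces equality rather than mere containment.
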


\begin{proof}
We first assume (1).  Since $G$ is a GVZ-group, we know that $\gamma_G (g)$ is a subgroup of $G$ for all elements $g \in G$ by Theorem B of \cite{SBML} and thus, $\gamma_G (g) = [g,G]$.  Now conclusion (2) follows immediately from Theorem \ref{nested cl}.  Now assume (2). Let $g, y \in G$. Then $\gamma_G (g) = \gamma_G (g^y)$, since these sets have the same size. Then means that for any element $x \in G$, we have
\[ [g,y]^{-1} [g,x] = [y,g] [g,y(y^{-1}x)] = [g^y,y^{-1}x] \in \gamma_G(g). \]
It follows that $\gamma_G (g)$ is a subgroup of $G$.  Hence, $G$ is a GVZ-group by Theorem B of \cite{SBML}.  It now follows that $\{[g,G] \mid g \in G\}$ is a chain, and so, $G$ is nested by Theorem~\ref{nested cl} (2).	
\end{proof}

\end{document}